\newtheorem{definition}{Definition}[section]
\newtheorem{lemma}[definition]{Lemma}
\newtheorem{theorem}[definition]{Theorem}
\theoremstyle{remark}
\newtheorem{remark}[definition]{Remark}
\def\@settitle{\begin{center}%
		\baselineskip14\p@\relax
		\bfseries
		\uppercasenonmath\@title
		\@title
		\ifx\@subtitle\@empty\else
		%\\[1ex]\uppercasenonmath\@subtitle
		%\footnotesize\mdseries\@subtitle
		\\[5ex]%\@subtitle
		\normalsize\mdseries\@subtitle
		\fi
	\end{center}%
}
\def\subtitle#1{\gdef\@subtitle{#1}}
\def\@subtitle{}
\def\R{\mathbb{R}}
\newcommand{\adist}[1][\Omega,K]{\delta_{#1}}
\newcommand{\inrad}{r}
\newcommand{\inradius}[1][\Omega, K]{\inrad_{#1}}
\newcommand{\pscal}[2]{\langle #1,\, #2\rangle}
\newcommand{\Per}[2][K]{\mathcal{P}_{#1}(#2)}
\DeclareMathOperator{\iso}{\mathcal{I}}
\newcommand{\mixed}[1]{V_{(#1)}}
\newcommand{\lstar}{\lambda^*}
\begin{document}
	\title[Inner parallel bodies]
	{%A bound for 
		Anisotropic perimeter\\ 
		and isoperimetric quotient\\
		of inner parallel bodies}
	
	\author[G.~Crasta]{Graziano Crasta}
	\address{Dipartimento di Matematica ``G.\ Castelnuovo'', Univ.\ di Roma I\\
		P.le A.\ Moro 2 -- I-00185 Roma (Italy)}
	\email{crasta@mat.uniroma1.it}

	\keywords{Inner parallel sets, anisotropic perimeter, isoperimetric quotient}
	\subjclass[2010]{52A20, 52A38, 52A39}
	%\date{\today}
	\date{January 8, 2021}

\begin{abstract}
The aim of this note is twofold:
to give a short proof of the results
in 
[S.\ Larson, \textsl{A bound for the perimeter of inner parallel bodies}, 
J.\ Funct.\ Anal.\ 271 (2016), 610–619]
and
[G.\ Domokos and Z.\ L\'angi, 
\textsl{The isoperimetric quotient of a convex body decreases monotonically
under the eikonal abrasion model}, Mathematika 65 (2019), 119–129];
and to generalize them to the anisotropic case. 
\end{abstract}

\maketitle

\section{Introduction}

Let $\Omega, K\subset\R^n$ be two convex bodies (i.e., compact convex sets) 
with non-empty interior, and let
\[
\Omega \sim \lambda K 
:= \{x\in\R^n\colon x + \lambda K \subset\Omega\}
\qquad \lambda\geq 0,
\]
be the family of \textsl{inner parallel sets} of $\Omega$
relative to $K$,
where $A\sim C := \bigcap_{x\in C} (A - x)$ 
denotes the \textsl{Minkowski difference}
of two convex bodies $A$ and $C$
(see \cite[\S 3.1]{Sch2}).
Let
\[
\inradius[\Omega, K] :=
\max\{\lambda\geq 0\colon
\lambda K + x \subset\Omega \ \text{for some}\ x\in\R^n\}
\]
be the \textsl{inradius of $\Omega$ relative to $K$}, that is,
the greatest number $\lambda$ for which $\Omega\sim \lambda K$
is not empty.

For every convex body $C\subset\R^n$, let $\Per{C}$ denote
its \textsl{anisotropic perimeter} relative to $K$, defined by
\begin{equation}
	\label{f:per}
	\Per{C} := \int_{\partial C} h_K(\nu_C)\, d\mathcal{H}^{n-1}\,,
\end{equation}
where 
$
h_K(\xi) := \sup\{\pscal{x}{\xi}\colon x\in K\}
$
is the \textsl{support function} of $K$,
$\nu_C$ denotes the exterior unit normal vector to $C$,
and $\mathcal{H}^{n-1}$ is the $(n-1)$-dimensional Hausdorff measure.
If $C$ is a convex body with non-empty interior,
$\Per{C}$ coincides with the anisotropic Minkowski content
\begin{equation}
	\label{f:derv}
	\left.\frac{d}{dt}
	V_n(C+t \, K) 
	\right|_{t=0}
	=
	\lim_{t \to 0} \frac{V_n(C+t \, K) - V_n(C)}{t}
	\,,
\end{equation}
where $V_n$ denotes the $n$-dimensional volume
(see \cite[Lemma~7.5.3]{Sch2}).
Furthermore, in the Euclidean setting
(i.e., when $K$ is the unit ball $B$ of $\R^n$),
then $\Per[B]{C} =  \mathcal{H}^{n-1}(\partial C)$.

\medskip
The main results of the present note are Theorems~\ref{thm:anis}
and~\ref{thm:ratio} below,
that have been proved in the Euclidean setting
in \cite[Thm.~1.2]{Larson2016}
and \cite[Thm.~1.1]{Domokos} respectively.
We refer the reader to these papers for motivations and applications.

\begin{theorem}
\label{thm:anis}
(i) Let $\Omega, K\subset\R^n$ be two convex bodies with non-empty interior.
Then it holds that
\begin{equation}
	\label{f:ineq2}
	\Per{\Omega\sim \lambda K} \geq
	\left(1 - \frac{\lambda}{\inradius[\Omega, K]}\right)^{n-1}_+ \Per{\Omega},
	\qquad \forall \lambda\geq 0.
\end{equation}
(ii) %Assume, in addition, that $K$ has a differentiable boundary. Then,
Equality holds in \eqref{f:ineq2} for some $\lstar\in (0, \inradius[\Omega,K])$
if and only if $\Omega$ is homothetic to a tangential body of $K$.
If this is the case equality holds for all $\lambda\geq 0$ and
every parallel set $\Omega\sim\lambda K$ is homothetic to $\Omega$
for every $\lambda\in [0,\inradius)$.
\end{theorem}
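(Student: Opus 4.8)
The plan is to deduce everything from the concavity of the function $\lambda \mapsto V_n(\Omega \sim \lambda K)^{1/n}$ together with the formula \eqref{f:derv} for the anisotropic perimeter. The key structural fact (going back to the theory of inner parallel bodies, see \cite[\S 3.1]{Sch2}) is that inner parallel bodies are \emph{additive} in the parameter: $(\Omega \sim \lambda K) \sim \mu K = \Omega \sim (\lambda+\mu) K$, and moreover $(\Omega \sim \lambda K) + \mu K \subset \Omega \sim (\lambda - \mu) K$ for $0 \le \mu \le \lambda$. Combining the latter inclusion with monotonicity of volume, for fixed $\lambda \in [0,\inradius)$ and small $t>0$ one gets $V_n\big((\Omega\sim\lambda K) + tK\big) \le V_n\big(\Omega\sim(\lambda-t)K\big)$, which upon passing to the difference quotient \eqref{f:derv} yields
\begin{equation}
\label{f:derbound}
\Per{\Omega\sim\lambda K} \le -\frac{d}{d\lambda} V_n(\Omega\sim\lambda K)
\end{equation}
(the right-hand side being the left derivative, which exists by concavity). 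First I would establish \eqref{f:derbound}, being slightly careful that the lower-dimensional degenerate case $\lambda = \inradius$ and the case where the difference quotient uses one-sided limits are handled correctly.

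Next I would introduce $g(\lambda) := V_n(\Omega \sim \lambda K)^{1/n}$, which is concave, nonnegative, and vanishes exactly at $\lambda = \inradius$ (for the last claim one uses that $\Omega\sim\inradius K$ has empty interior, hence zero volume). Rewriting \eqref{f:derbound} in terms of $g$ gives
\[
\Per{\Omega\sim\lambda K} \le -n\, g(\lambda)^{n-1} g'(\lambda).
\]
Since $g$ is concave with $g(\inradius) = 0$, we have $g(\lambda) \le -(\inradius - \lambda)\, g'(\lambda)$, equivalently $-g'(\lambda) \ge g(\lambda)/(\inradius-\lambda)$; wait — that inequality goes the wrong way for a direct substitution, so instead I would argue via the differential inequality directly: set $P(\lambda) := \Per{\Omega\sim\lambda K}$ and show $P(\lambda) \le -n g(\lambda)^{n-1} g'(\lambda)$ together with $g' \le 0$, then use concavity of $g$ in the form $g(\lambda) \ge \big(1 - \lambda/\inradius\big) g(0)$ — this is the correct direction — and compare. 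The cleanest route: note $h(\lambda):=\big(1-\lambda/\inradius\big)^{n-1}$ satisfies $h(0)=1$, $h(\inradius)=0$, and one checks that $\lambda\mapsto P(\lambda)\big/\big(1-\lambda/\inradius\big)^{n-1}$ is nondecreasing by differentiating and invoking \eqref{f:derbound} together with the Brunn–Minkowski-type concavity of $g$; evaluating at $\lambda=0$ then gives \eqref{f:ineq2} for $\lambda < \inradius$, and the case $\lambda \ge \inradius$ is trivial since the left side is then $0$ (empty or lower-dimensional set) while the right side is $0$ by the $(\cdot)_+$ truncation.

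For part (ii), equality in \eqref{f:ineq2} at some $\lstar \in (0,\inradius)$ forces equality throughout the chain of inequalities on the interval $[0,\lstar]$ — in particular equality in \eqref{f:derbound}, hence $(\Omega\sim\lambda K) + tK = \Omega\sim(\lambda-t)K$ up to a set of measure zero, and equality in the Brunn–Minkowski concavity of $g$, which forces $\Omega\sim\lambda K$ to be homothetic to $K$ for each such $\lambda$, or more precisely forces the inner parallel bodies to form a homothety family shrinking to a point. Tracing this back, $\Omega$ itself must then be a \emph{tangential body} of (a translate/dilate of) $K$: this is exactly the characterization of equality in the classical inradius/inner-parallel-body estimates, and I would cite the relevant rigidity statement (e.g.\ \cite[\S 7.2 and \S 3.1]{Sch2}) rather than reprove it. Conversely, if $\Omega$ is homothetic to a tangential body of $K$, the known description of inner parallel bodies of tangential bodies gives $\Omega \sim \lambda K = \big(1 - \lambda/\inradius\big)(\Omega - c) + c$ for a suitable center $c$ and all $\lambda \in [0,\inradius)$, whence each $\Omega\sim\lambda K$ is homothetic to $\Omega$ and, by the $(n-1)$-homogeneity of $\Per{\cdot}$ under dilations, \eqref{f:ineq2} holds with equality for every $\lambda \ge 0$.

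\medskip
\noindent\emph{Main obstacle.} The delicate point is the equality analysis in part (ii): extracting from "equality in \eqref{f:ineq2} at a single $\lstar$" the rigidity that $\Omega$ is homothetic to a tangential body of $K$. This requires propagating the equality from the integrated inequality back through the one-sided derivative estimate \eqref{f:derbound} to a pointwise/set-theoretic equality $(\Omega\sim\lambda K) + tK = \Omega\sim(\lambda-t)K$, and then invoking the (somewhat technical) classical characterization of when inner parallel bodies behave multiplicatively — i.e., the tangential body condition. Getting the measure-zero subtleties and the degenerate endpoint $\lambda=\inradius$ right, and citing precisely the form of Schneider's rigidity result that applies, is where the real work lies.
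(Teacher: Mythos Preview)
Your route differs from the paper's. The paper works directly with $f_1(\lambda):=\mixed{1}(\Omega\sim\lambda K,K)^{1/(n-1)}$, which equals $\Per{\Omega\sim\lambda K}^{1/(n-1)}$ up to a constant, and proves it is concave on $[0,\inrad]$ using the concavity of the family $\lambda\mapsto\Omega\sim\lambda K$ together with the \emph{generalized} Brunn--Minkowski inequality for mixed volumes (\cite[Thm.~7.4.5]{Sch2}). Inequality~\eqref{f:ineq2} is then immediate from $f_1(\lambda)\geq(1-\lambda/\inrad)f_1(0)$. You instead aim to use only the concavity of $g(\lambda)=V_n(\Omega\sim\lambda K)^{1/n}$, i.e.\ the ordinary Brunn--Minkowski inequality, combined with the relation between $P(\lambda):=\Per{\Omega\sim\lambda K}$ and $v_0'$. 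That would be a more elementary argument, and it can in fact be made to work---but not as you have written it.

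The real gap is that your inequality $P(\lambda)\leq -v_0'(\lambda)$ points the wrong way: to bound $P(\lambda)$ from \emph{below} by $(1-\lambda/\inrad)^{n-1}P(0)$ you need the \emph{equality} $P(\lambda)=-v_0'(\lambda)$ (this is \cite[Lemma~7.5.3]{Sch2}, recorded in the paper as \eqref{f:deriv}); an upper bound on $P$ yields nothing about its lower bound. Once the equality is in hand, your monotonicity claim is correct---one writes
\[
\frac{P(\lambda)}{(\inrad-\lambda)^{n-1}}
= n\left[\frac{g(\lambda)}{\inrad-\lambda}\right]^{n-1}\cdot\bigl(-g'(\lambda)\bigr)
\]
as a product of two nonnegative nondecreasing functions (the first by concavity of $g$ with $g(\inrad)=0$, the second since $g'$ is nonincreasing)---but you do not carry this out; ``one checks by differentiating'' hides the actual content. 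For part~(ii) the paper is also more explicit: linearity of $f_1$ forces linearity of $f_0=g$ by integrating $v_0'=-n f_1^{\,n-1}$, and then a direct volume comparison shows that, after a translation, $\frac{\inrad-\lambda}{\inrad}\,\Omega=\Omega\sim\lambda K$. Your sketch instead appeals to Brunn--Minkowski rigidity (momentarily misstating it as homothety to $K$ rather than homothety among the parallel bodies) and then defers the tangential-body conclusion to an unspecified citation---which is exactly the step that needs an argument.
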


(We postpone to Section~\ref{s:s2} the definition of
tangential body.)

\begin{theorem}
\label{thm:ratio}
Let $\Omega, K\subset\R^n$ be two convex bodies with non-empty interior, 
and let
\[
\iso(\lambda) := \frac{V_n(\Omega\sim\lambda K)}{\Per{\Omega \sim \lambda K}^{\frac{n}{n-1}}}\,,
\qquad
\lambda\in [0, \inradius[\Omega, K])
\]
denote the \textsl{anisotropic isoperimetric quotient}
of $\Omega\sim \lambda K$ relative to $K$.

Then, either $\iso{}$ is strictly decreasing on $[0, \inradius[\Omega, K])$,
or there is some value $\lambda^*\in [0, \inradius[\Omega, K])$ 
such that $\iso{}$ is strictly
decreasing on $[0, \lambda^*]$ and constant on $[\lambda^*, \inradius[\Omega, K])$. 
Furthermore, in the latter
case, for any $\lambda\in[\lambda^*, \inradius[\Omega, K])$, 
$\Omega\sim\lambda K$ is homothetic both to $\Omega\sim\lambda^* K$
and to a tangential body of $K$
(more precisely, to an $(n-2)$-tangential body of $K$).
\end{theorem}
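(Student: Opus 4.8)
The plan is to reduce everything, via one substitution, to a single concavity fact. Write $\Omega_\mu:=\Omega\sim\mu K$, $V(\mu):=V_n(\Omega_\mu)$, $P(\mu):=\Per{\Omega_\mu}$ and $g:=V^{1/n}$, all as functions of $\mu\in[0,\inradius)$. Two inputs are needed. First, $g$ is concave on $[0,\inradius]$: if $\Omega_{\lambda_i}+\lambda_iK\subseteq\Omega$ then $(1-s)(\Omega_{\lambda_0}+\lambda_0K)+s(\Omega_{\lambda_1}+\lambda_1K)\subseteq\Omega$ by convexity of $\Omega$, i.e.
\[
\Omega_{(1-s)\lambda_0+s\lambda_1}\ \supseteq\ (1-s)\,\Omega_{\lambda_0}+s\,\Omega_{\lambda_1},\qquad s\in[0,1],
\]
and combining this with monotonicity of $V_n$ under inclusion and the Brunn--Minkowski inequality gives $g\big((1-s)\lambda_0+s\lambda_1\big)\ge(1-s)g(\lambda_0)+sg(\lambda_1)$. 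Second, the classical differentiation formula for inner parallel bodies (the inner counterpart of the Minkowski-content identity~\eqref{f:derv}, to be quoted from the literature): $V$ is locally Lipschitz on $[0,\inradius)$ and $V'(\mu)=-\Per{\Omega_\mu}$ for a.e.\ $\mu$. Since $V'=ng^{n-1}g'$ a.e., this yields $P=-ng^{n-1}g'$ a.e., hence
\[
\iso(\mu)=\frac{V(\mu)}{P(\mu)^{n/(n-1)}}=\big(n\,(-g'(\mu))\big)^{-n/(n-1)}\qquad\text{for a.e.\ }\mu .
\]
Because $g$ is concave, $-g'$ is non-decreasing, so the right-hand side is non-increasing; $\iso{}$ is continuous on $[0,\inradius)$ (there $V>0$, $P>0$) and agrees a.e.\ with it, hence $\iso{}$ is non-increasing. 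Moreover $\iso{}$ is constant on a subinterval $J$ of $[0,\inradius)$ if and only if $g'$ is a.e.\ constant on $J$, i.e.\ if and only if $g$ is affine on $J$.

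The ``strictly decreasing, then constant'' shape follows from a propagation lemma: \emph{if $\iso{}$ is constant on some non-degenerate $[\lambda_0,\lambda_1]\subseteq[0,\inradius)$, it is constant on $[\lambda_0,\inradius)$.} Constancy on $[\lambda_0,\lambda_1]$ makes $g$ affine there, which forces equality in
\[
g\big((1-s)\lambda_0+s\lambda_1\big)\ \ge\ V_n\big((1-s)\Omega_{\lambda_0}+s\Omega_{\lambda_1}\big)^{1/n}\ \ge\ (1-s)\,g(\lambda_0)+s\,g(\lambda_1),
\]
so equality holds in Brunn--Minkowski for the full-dimensional bodies $\Omega_{\lambda_0},\Omega_{\lambda_1}$, whence $\Omega_{\lambda_1}=c\,\Omega_{\lambda_0}+v$ for some $c>0$. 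Since $\Omega_{\lambda_1}=\Omega_{\lambda_0}\sim(\lambda_1-\lambda_0)K$, and $\inradius[\Omega_{\lambda_0},K]=\inradius-\lambda_0$ while $\inradius[c\,\Omega_{\lambda_0}+v,K]=c\,\inradius[\Omega_{\lambda_0},K]$, one gets $c=1-(\lambda_1-\lambda_0)/\inradius[\Omega_{\lambda_0},K]$, and then $\Per{\Omega_{\lambda_0}\sim(\lambda_1-\lambda_0)K}=c^{\,n-1}\Per{\Omega_{\lambda_0}}$ realizes equality in~\eqref{f:ineq2} for $\Omega_{\lambda_0}$ at the parameter $\lambda_1-\lambda_0\in(0,\inradius[\Omega_{\lambda_0},K])$. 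By Theorem~\ref{thm:anis}(ii) applied to $\Omega_{\lambda_0}$, every $\Omega_{\lambda_0+\mu}=\Omega_{\lambda_0}\sim\mu K$ is homothetic to $\Omega_{\lambda_0}$, with ratio affine in $\mu$, for $\mu\in[0,\inradius-\lambda_0)$; hence $g$ is affine, and therefore $\iso{}$ constant, on $[\lambda_0,\inradius)$, proving the lemma. Given the lemma, if $\iso{}$ is not strictly decreasing on $[0,\inradius)$ it is constant on some $[\lambda_0,\lambda_1]$, hence on $[\lambda_0,\inradius)$; setting $\lambda^*:=\inf\{\lambda:\iso{}\text{ is constant on }[\lambda,\inradius)\}<\inradius$, continuity gives $\iso{}$ constant on $[\lambda^*,\inradius)$, and minimality of $\lambda^*$ together with the lemma forces $\iso{}$ strictly decreasing on $[0,\lambda^*]$.

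For $\lambda\in[\lambda^*,\inradius)$ the argument above already yields that every $\Omega_\lambda$ is homothetic to $\Omega_{\lambda^*}$, and that $\Omega_{\lambda^*}$ is homothetic to a convex body $T$ for which $T\sim\delta K$ is a homothet of $T$ for some $\delta\in(0,\inradius[T,K])$; by Theorem~\ref{thm:anis}(ii) such a $T$ is homothetic to a tangential body of $K$. The main remaining obstacle is to sharpen ``tangential body'' to ``\emph{$(n-2)$-tangential body}'': this refinement is invisible to the Brunn--Minkowski equality case and, I expect, must come from Schneider's structure theory of $p$-tangential bodies, via a lemma to the effect that a convex body $\Omega$ whose relative inner parallel set $\Omega\sim\delta K$ is a homothet of $\Omega$ for some $0<\delta<\inradius[\Omega,K]$ is homothetic to an $(n-2)$-tangential body of $K$ --- a statement about Minkowski differences and faces in the spirit of \cite[\S2.2]{Sch2}. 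I would prove that lemma separately; modulo it, the monotonicity and the dichotomy follow at once from the identity $\iso{}=\big(n(-g')\big)^{-n/(n-1)}$ and the propagation argument above.
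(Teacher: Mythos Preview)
Your approach is correct for the monotonicity and the dichotomy, and it is genuinely different from the paper's. The paper never introduces $g=V^{1/n}$; instead it computes the right derivative $\iso'_+$ directly, writes it as a negative multiple of $\xi:=p^2+\tfrac{n}{n-1}\,v\,p'_+$, and then cites Schneider's Theorem~7.6.19 for three facts at once: $\xi\ge 0$, $\xi$ is non-increasing, and the $(n{-}2)$-tangential body characterization when $\xi$ vanishes. Your identity $\iso=(n(-g'))^{-n/(n-1)}$ is more transparent for the monotonicity, because $\xi\ge 0$ is literally the condition $g''\le 0$, i.e.\ the concavity of $f_0$, which you obtain from Brunn--Minkowski and the concavity of the family of inner parallel sets alone; and your propagation lemma (Brunn--Minkowski equality case plus Theorem~\ref{thm:anis}(ii)) substitutes for the deeper fact that $\xi$ is non-increasing. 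In that sense you prove more from first principles. The trade-off is that the paper's single citation to Schneider also delivers the $(n{-}2)$-tangential refinement, which you honestly flag as an unresolved step; the lemma you conjecture is precisely what is contained in the equality analysis of \cite[Thm.~7.6.19]{Sch2}. One small cleanup: since $v_0$ is differentiable everywhere with $v_0'=-n\,v_1$ (see \eqref{f:deriv}), your $g=V^{1/n}$ is differentiable everywhere on $[0,\inrad)$, so the identity $\iso=(n(-g'))^{-n/(n-1)}$ holds pointwise and the detour through ``continuous and agrees a.e.'' is unnecessary.
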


Both results %are related to the properties 
can be interpreted in terms of the level sets of
the \textsl{anisotropic distance function} from the boundary of $\Omega$,
defined by
\begin{equation}
\label{f:dist}
\adist(x) := \inf\{\rho_K(y-x)\colon y\in\Omega^c\}
,\qquad x\in\Omega
\end{equation}
where $\rho_K(x) := \max\{\lambda\geq 0\colon \lambda x\in K\}$ is the gauge function of $K$ and we assume that $K$ contains $0$
as an interior point
(see \cite{CM6} for a detailed analysis of $\adist$).
Specifically,
since $\rho_K(x) \leq 1$ if and only if $x\in K$,
it is not difficult to check that
$\Omega\sim\lambda K = \{x\in\Omega\colon \adist(x) \geq \lambda\}$. 

We remark that related results in the Euclidean setting are contained
in \cite[\S 3]{C7},
where, in particular, one can find the proof of
\cite[Thm.~1.2]{Larson2016}
(see p.~104 and Lemma~3.7 therein).

\section{Proof of Theorem~\ref{thm:anis}}
\label{s:s2}

In the following we shall use the notations of \cite{Sch2}.
Let $C\subset\R^n$ be a convex body. We say that $x\in\partial C$ is
a regular point of $\partial C$ if $C$ admits a unique support plane at $x$.
Given two convex bodies $C, K\subset\R^n$,
we say that $C$ is a \textsl{tangential body} of $K$ if,
for each regular point $x$ of $\partial C$, 
the support plane of $C$ at $x$ is also a support plane of $K$
(see \cite[\S 2.2]{Sch2}).
From \cite[Thm.~2.2.10]{Sch2} it follows that
$C$ is a tangential body of a ball if and only if it is homothetic to
its \textsl{form body}, defined by
\[
C_* := \bigcup_{\nu \in S}  \{
x\in\R^n\colon \pscal{x}{\nu} \leq 1\}\,,
\]
where $S$ is the set of outward unit normal vectors to $\partial C$
at regular points of $\partial C$.

The definition of $p$-tangential body is more involved.
Since it is not of primary importance for the exposition 
of the paper,
we refer to \cite[\S 2.2]{Sch2}. 
In connection with the statement of Theorem~\ref{thm:ratio}
we limit ourselves to recall that, if $C$ is a $p$-tangential
body of $K$ for some $p\in \{0, \ldots, n-1\}$,
then it is also a tangential body of $K$.

Given the convex bodies $K_1, \ldots, K_n \subset \R^n$, we denote by
$V(K_1, \ldots, K_n)$ their mixed volume (see \cite[\S 5.1]{Sch2}).
Moreover, for every pair $C,K$ of convex bodies we define
\[
\mixed{i}(C, K) := V(\underbrace{C, \ldots, C}_\text{$n-i$ times}, 
\underbrace{K, \ldots, K}_\text{$i$ times})\,,
\qquad
i\in\{0,\ldots,n\}\,.
\]

From now on we shall assume that
$\Omega, K\subset\R^n$ are two convex bodies with non-empty interior.
To simplify the notation, we denote by $\inrad := \inradius[\Omega, K]$
the inradius of $\Omega$ relative to $K$,
and we define the functions
\[
v_i(\lambda) := \mixed{i}(\Omega\sim\lambda K, K),
\qquad
\lambda\in [0,\inrad],
\quad i\in \{0, \ldots, n\}.
\]
We recall that, by \cite[Lemma~7.5.3]{Sch2},
$v_0$ is differentiable and 
\begin{equation}
\label{f:deriv}
v_0'(\lambda) = -n\, v_1(\lambda),
\qquad
\forall \lambda\in [0,\inrad].
\end{equation}

\begin{theorem}
\label{thm:conc}
(i) The functions
\begin{equation}
\label{f:fi}
f_i(\lambda) := 
v_i(\lambda)^{\frac{1}{n-i}},
\qquad
i\in\{0, \ldots, n-1\},
\end{equation}
are concave in $[0, \inrad]$.

(ii) Assume that there exists $\lstar\in [0,\inrad)$ such that,
for $i=0$ or $i=1$,
\begin{equation}
\label{f:filin}
f_i(\lambda) = \frac{\inrad-\lambda}{\inrad-\lstar}\, f_i(\lstar),
\qquad \forall \lambda\in [\lstar, \inrad].
\end{equation}
Then, for every $\lambda\in [\lstar, \inrad)$,
$\Omega\sim\lambda K$
is homothetic both to  $\Omega\sim \lstar K$,
and to a tangential body of $K$.
\end{theorem}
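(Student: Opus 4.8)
The plan is to deduce everything from the Alexandrov--Fenchel inequalities applied to the inner parallel family. First I would record the fundamental identity for Minkowski differences: for $\lambda,\mu\ge 0$ with $\lambda+\mu\le\inrad$, one has $(\Omega\sim\lambda K)\sim\mu K = \Omega\sim(\lambda+\mu)K$, together with the fact (a consequence of \cite[Lemma~3.1.11]{Sch2} or the structure of the inradius) that $\Omega\sim\inrad K$ is a lower-dimensional set, and that for $0\le\lambda<\inrad$ the body $\Omega\sim\lambda K$ has non-empty interior. The next and crucial input is the representation of the mixed volumes along the family. Because the inner parallel bodies do \emph{not} in general satisfy $(\Omega\sim\lambda K)+\lambda K=\Omega$, one cannot simply expand $V_n(\Omega)$; instead the key is that $\lambda\mapsto\Omega\sim\lambda K$ is, in a suitable sense, a concave family, and more precisely that for any fixed convex body $L$ the function $\lambda\mapsto V(\Omega\sim\lambda K,\ldots)$ inherits concavity/linearity properties from the one-variable result~\eqref{f:deriv}. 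I would establish that $v_i(\lambda)=\mixed{i}(\Omega\sim\lambda K,K)$ satisfies, for each $i\in\{0,\ldots,n-1\}$,
\begin{equation*}
v_i'(\lambda) = -(n-i)\,v_{i+1}(\lambda),\qquad \lambda\in[0,\inrad),
\end{equation*}
by differentiating $\mixed{i}(\Omega\sim\lambda K,K)=V(\Omega\sim\lambda K[n-i],K[i])$ in $\lambda$ using the same argument as in \cite[Lemma~7.5.3]{Sch2} (the derivative of $t\mapsto V(C+tK,\ldots)$), applied to each of the $n-i$ slots carrying $\Omega\sim\lambda K$; the point is that near a fixed $\lambda_0<\inrad$ the inner parallel body behaves like $(\Omega\sim\lambda_0 K)\sim(\lambda-\lambda_0)K$, and one uses one-sided versions of the derivative formula, patched together.

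Granting these derivative identities, part~(i) follows from the Alexandrov--Fenchel inequality in the form $v_i(\lambda)\,v_{i+2}(\lambda)\le v_{i+1}(\lambda)^2$ (i.e. $\mixed{i+1}(C,K)^2\ge\mixed{i}(C,K)\,\mixed{i+2}(C,K)$, see \cite[\S 7.4]{Sch2}), applied with $C=\Omega\sim\lambda K$. Indeed, writing $f_i=v_i^{1/(n-i)}$ one computes
\begin{equation*}
(n-i)^2\,v_i^{(2n-2i-1)/(n-i)}\,f_i''
= (n-i-1)\bigl(v_iv_{i+2}-v_{i+1}^2\bigr)\le 0,
\end{equation*}
after substituting $v_i'=-(n-i)v_{i+1}$ and $v_{i+1}'=-(n-i-1)v_{i+2}$; since $v_i>0$ on $[0,\inrad)$ this gives $f_i''\le 0$ there, and continuity of $f_i$ up to $\inrad$ (where $v_i(\inrad)$ may vanish) yields concavity on the closed interval. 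For part~(ii), equality of $f_i$ with its chord on $[\lstar,\inrad]$ forces $f_i''\equiv 0$ there, hence $v_i(\lambda)v_{i+2}(\lambda)=v_{i+1}(\lambda)^2$ for all $\lambda\in[\lstar,\inrad)$; I would then invoke the equality case of the Alexandrov--Fenchel / Minkowski inequality. For $i=0$ this is the equality case of the Minkowski inequality $\mixed{1}(C,K)^n\ge V_n(C)^{n-1}V_n(K)$ reorganized, which holds iff $C$ is homothetic to $K$ — but here the right statement is subtler: equality $v_0v_2=v_1^2$ is exactly the equality case characterizing $C=\Omega\sim\lambda K$ as a \emph{tangential body} of $K$ (this is where the form-body description recalled before the theorem, and \cite[Thm.~2.2.10, Thm.~6.6.18]{Sch2} or the equality discussion in \cite[\S 7.6]{Sch2}, enters). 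Thus each $\Omega\sim\lambda K$, $\lambda\in[\lstar,\inrad)$, is a tangential body of $K$; homothety among them for different $\lambda$ follows because the chord condition makes $f_i$ affine, so $V_n(\Omega\sim\lambda K)$ (case $i=0$) scales like $(\inrad-\lambda)^n$ while the tangential-body property is preserved, and two tangential bodies of $K$ with proportional support sets $S$ and volumes scaling correctly must be homothetic.

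The main obstacle I anticipate is \emph{not} the concavity computation — that is a short manipulation once the derivative identities are in hand — but rather two technical points: first, rigorously justifying $v_i'(\lambda)=-(n-i)v_{i+1}(\lambda)$ for the inner parallel family, since $\lambda\mapsto\Omega\sim\lambda K$ is only concave (Minkowski difference does not add), so one must localize near each $\lambda_0<\inrad$, write $\Omega\sim\lambda K=(\Omega\sim\lambda_0K)\sim(\lambda-\lambda_0)K$, and control both one-sided derivatives, handling the possible non-differentiability carefully (this is presumably where \cite[Lemma~7.5.3]{Sch2} and properties of $\delta_{\Omega,K}$ from \cite{CM6} or \cite{C7} are used); and second, correctly identifying the equality case — pinning down that $v_iv_{i+2}=v_{i+1}^2$ along the whole family forces the \emph{tangential body} conclusion (and, with the sharper analysis of which Alexandrov--Fenchel equality holds, the $(n-2)$-tangential refinement used in Theorem~\ref{thm:ratio}), rather than merely homothety to $K$, and extracting the mutual homothety of the parallel bodies from the affine behaviour of $f_0$.
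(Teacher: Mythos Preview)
Your approach differs substantially from the paper's, and for part~(i) it contains a genuine gap that you yourself flag as an obstacle but do not resolve: the identity $v_i'(\lambda)=-(n-i)\,v_{i+1}(\lambda)$ is \emph{false} for $i\ge 1$. Take $n=2$, $K$ the unit disk and $\Omega$ a square of inradius $a$; then $\Omega\sim\lambda K$ is a square of inradius $a-\lambda$, so $v_1(\lambda)=4(a-\lambda)$ while $v_2(\lambda)=\pi$, and $v_1'=-4\neq-\pi=-(n-1)v_2$. In general only the one-sided inequality $(v_i)'_+(\lambda)\le -(n-i)\,v_{i+1}(\lambda)$ holds (from $( \Omega\sim(\lambda+\mu)K)+\mu K\subseteq\Omega\sim\lambda K$ and monotonicity of mixed volumes), and this is not enough to run your second-derivative computation. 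The paper avoids the issue entirely: it uses that $\lambda\mapsto\Omega\sim\lambda K$ is a concave family in the Minkowski sense (\cite[Lemma~3.1.13]{Sch2}), and then applies the generalized Brunn--Minkowski inequality $\mixed{i}((1-t)A+tB,K)^{1/(n-i)}\ge(1-t)\mixed{i}(A,K)^{1/(n-i)}+t\mixed{i}(B,K)^{1/(n-i)}$ (\cite[Thm.~7.4.5]{Sch2}) together with monotonicity. Concavity of every $f_i$ follows in one line, with no differentiation of $v_i$ for $i\ge 1$.

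For part~(ii) your route through Alexandrov--Fenchel equality cases is also more indirect than needed, and your last step (``two tangential bodies of $K$ with proportional support sets and volumes scaling correctly must be homothetic'') is not justified. The paper instead first reduces $i=1$ to $i=0$ by integrating $v_0'=-n\,v_1=-n f_1^{\,n-1}$, and then argues directly at the level of sets: after translating so that $(\inrad-\lstar)K\subseteq\Omega^*:=\Omega\sim\lstar K$, convexity of $\Omega^*$ gives
\[
\frac{\inrad-\lambda}{\inrad-\lstar}\,\Omega^*+(\lambda-\lstar)K
\;\subseteq\;
\frac{\inrad-\lambda}{\inrad-\lstar}\,\Omega^*+\frac{\lambda-\lstar}{\inrad-\lstar}\,\Omega^*
\;=\;\Omega^*,
\]
hence $\frac{\inrad-\lambda}{\inrad-\lstar}\,\Omega^*\subseteq\Omega^*\sim(\lambda-\lstar)K=\Omega\sim\lambda K$. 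But \eqref{f:filin} with $i=0$ says these two convex bodies have equal volume, so they coincide. This gives the homothety of every $\Omega\sim\lambda K$ to $\Omega^*$ immediately; the tangential-body property then follows from the fact that $\Omega^*$ has all its inner parallel bodies homothetic to itself, without invoking any equality characterization in Alexandrov--Fenchel.
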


\begin{proof}
(i) The claim is a direct consequence of the
concavity property of the family $\lambda \mapsto \Omega \sim \lambda\, K$
(see \cite[Lemma~3.1.13]{Sch2}) 
and of the Generalized Brunn--Minkowski inequality
(see \cite[Theorem~7.4.5]{Sch2}).

(ii) Since, by \eqref{f:deriv}, $v_0' = -n\, v_1 = -n\, f_1^{n-1}$ and $v_0(r) = 0$,
if \eqref{f:filin} holds for $i=1$ then it holds also for $i=0$.
Hence, it is enough to prove the claim only in the case $i=0$.

Therefore, assume that \eqref{f:filin} holds for $i=0$ and 
let $\lambda\in [\lstar, \inrad)$.
After a translation, 
we can assume that $\inrad K \subseteq \Omega$, 
so that $(\inrad-\lstar) K \subseteq \Omega\sim \lstar K =: \Omega^*$.
Hence
\[
\begin{split}
\frac{\inrad-\lambda}{\inrad-\lstar}\,\Omega^*
& =
\left[\frac{\inrad-\lambda}{\inrad-\lstar}\,\Omega^* + (\lambda-\lstar) K\right]
\sim (\lambda-\lstar) K
\\
& \subseteq
\Omega^*\sim (\lambda-\lstar) K = \Omega\sim\lambda K\,.
\end{split}
\]
On the other hand, \eqref{f:filin} implies that the sets
$\frac{\inrad-\lambda}{\inrad-\lstar}\,\Omega^*$ and $\Omega\sim \lambda K$
have the same volume,
so that they must coincide, and the conclusion follows.
\end{proof}

The proof of Theorem~\ref{thm:anis}(i)
is a direct consequence of Theorem~\ref{thm:conc}(i),
once we recall that
$\Per[K]{C} = n\, \mixed{1}(C, K)$
(see \cite[(5.34)]{Sch2}).
Specifically,
\[
\Per{\Omega\sim \lambda K}^{\frac{1}{n-1}} = n^{\frac{1}{n-1}}\, f_1(\lambda)
\]
is a concave (non-negative) function in $[0, \inrad]$,
so that \eqref{f:ineq2} follows.

\medskip
Let us prove part~(ii).
Assume that equality holds in \eqref{f:ineq2} for some 
$\lambda_0 \in (0,\inrad)$.
By the concavity of $f_1$ it follows that
the equality holds in \eqref{f:ineq2} for every
$\lambda \in [0,\inrad]$.
Hence, the conclusion follows from Theorem~\ref{thm:conc}(ii).

\section{Proof of Theorem~\ref{thm:ratio}}

Using the notation of Section~\ref{s:s2}, we recall that
\[
v(\lambda) := V_n(\Omega\sim \lambda K) = v_0(\lambda),
\quad
p(\lambda) := \Per{\Omega \sim \lambda K} = n\, v_1(\lambda),
\qquad
\lambda\in [0, \inrad].
\]
By \eqref{f:deriv},
$v$ is differentiable everywhere
with $v'(\lambda) = -p(\lambda)$,
whereas $p$ is differentiable almost everywhere and admits left and
right derivatives at every point,
since $p^{\frac{1}{n-1}}$ coincides, up to a constant factor, 
with the concave function $f_1$.

Hence, $\iso$ is right-differentiable at every point of
$[0, \inrad)$,
and a direct computation shows that
its right derivative is given by
\[
\iso'_+(\lambda) =
-p(\lambda)^{-\frac{2n+1}{n-1}} \, \xi(\lambda),
\qquad
\lambda\in [0, \inrad),
\]
where
\begin{equation}
\label{f:xi}
\xi(\lambda) := p(\lambda)^2 + \frac{n}{n-1}\, v(\lambda) p'_+(\lambda).
\end{equation}

The proof of Theorem~\ref{thm:ratio} is then an easy consequence
of the following result.

\begin{lemma}
\label{lem:tan}
The function $\xi$, defined in \eqref{f:xi},
is non-negative and non-increasing in 
$[0, \inrad)$.
Furthermore, if $\xi$ vanishes at some point
$\lstar\in [0, \inrad)$,
then \eqref{f:filin} holds for $i=0$ and $i=1$, 
and, in addition, 
$\Omega\sim \lstar K$ is homothetic to an $(n-2)$-tangential body
of $K$.
\end{lemma}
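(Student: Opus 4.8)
The plan is to establish the three assertions of Lemma~\ref{lem:tan} in order: non-negativity of $\xi$, monotonicity of $\xi$, and the rigidity statement when $\xi$ vanishes. For non-negativity, I would rewrite $\xi$ in terms of the functions $f_0$ and $f_1$ from Theorem~\ref{thm:conc}. Since $v = f_0^n$ and $p = n\, v_1 = n\, f_1^{n-1}$, a direct differentiation gives $p'_+ = n(n-1) f_1^{n-2} (f_1)'_+$, and $v'=-p$ gives $n f_0^{n-1} f_0' = -n f_1^{n-1}$, i.e.\ $f_0' = -(f_1/f_0)^{n-1}$. Substituting these into \eqref{f:xi} and factoring, one finds that (up to a positive factor) $\xi$ has the same sign as
\[
f_1 + n\, f_0\, (f_1)'_+ \cdot \frac{f_0^{?}}{\cdots},
\]
and after simplification the bracket becomes proportional to $f_1 f_0' - \tfrac{n}{n-1} f_0 (f_1)'_+$ or something manifestly related to the concavity of $f_0$ and $f_1$. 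More cleanly: the quantity $\iso = f_0^n / (n^{1/(n-1)} f_1)^n = \text{const}\cdot (f_0/f_1)^n$, so $\iso$ is (up to a constant) the $n$-th power of the ratio $g := f_0/f_1$; hence $\iso'_+$ has the sign of $g'_+ = (f_0' f_1 - f_0 (f_1)'_+)/f_1^2$. Both $f_0$ and $f_1$ are concave and non-negative on $[0,r]$ with $f_0(r)=f_1(r)=0$; a concave function vanishing at the right endpoint is non-negative and non-increasing there, and in fact $f_0, f_1$ are dominated by their chords to $(r,0)$, which is exactly what forces $g$ to be non-increasing. So I would prove $g_+' \le 0$, equivalently $\xi \ge 0$.

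For monotonicity of $\xi$, the cleanest route is to show $\iso$ is \emph{concave} (not merely non-increasing after its max): indeed $\xi$ non-increasing is essentially the statement that $\iso'_+$ changes sign at most once from negative-or-zero onward, i.e.\ $\iso$ has the structure claimed in Theorem~\ref{thm:ratio}. I would argue that $g = f_0/f_1$ is a ratio of two concave functions both vanishing at $r$; writing $f_i(\lambda) = (r-\lambda) \phi_i(\lambda)$ is not automatic, but one can instead use that $f_i$ concave with $f_i(r)=0$ implies $\lambda \mapsto f_i(\lambda)/(r-\lambda)$ is non-decreasing on $[0,r)$. Hence $1/g = f_1/f_0 = \big(f_1/(r-\lambda)\big)\big/\big(f_0/(r-\lambda)\big)$ is a ratio of a non-decreasing positive function over another; that alone is not enough, so instead I expect to differentiate $\xi$ directly, express $\xi'_+$ (or the relevant one-sided derivative) in terms of $f_1, (f_1)'_+$ and the second-order concavity information, and show the sign is non-positive using $(f_1)''\le 0$ in the sense of distributions together with $v' = -p$. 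This is the step I expect to be the main obstacle, because $p$ is only one-sided differentiable and I must handle the non-smooth points of $f_1$ carefully — presumably by working with the concave function $f_1$ and its right derivative, which is non-increasing, and checking that all boundary terms have the right sign.

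For the rigidity part, suppose $\xi(\lstar) = 0$ for some $\lstar \in [0,r)$. By the monotonicity just proved, $\xi \equiv 0$ on $[\lstar, r)$, hence $\iso'_+ \equiv 0$ there, so $\iso$ is constant on $[\lstar, r)$, i.e.\ $g = f_0/f_1$ is constant on $[\lstar, r)$. Combined with the fact that a concave function vanishing at $r$ and proportional to another such must each be linear there (if $g$ is constant and both $f_i$ are concave with value $0$ at $r$, the only way their ratio stays constant while respecting concavity is that each $f_i$ is affine on $[\lstar, r)$, namely $f_i(\lambda) = \frac{r-\lambda}{r-\lstar} f_i(\lstar)$), we obtain \eqref{f:filin} for $i = 0$ and $i=1$. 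Then Theorem~\ref{thm:conc}(ii) immediately gives that $\Omega \sim \lambda K$ is homothetic to $\Omega \sim \lstar K$ and to a tangential body of $K$ for every $\lambda \in [\lstar, r)$. The refinement to an $(n-2)$-tangential body requires more: equality in the relevant Brunn--Minkowski-type / Minkowski inequality (namely $v_1(\lambda)^{1/(n-1)}$ and $v_0(\lambda)^{1/n}$ both affine, which by \cite[Lemma~7.5.3]{Sch2} and the equality conditions there — or by the equality case of the Aleksandrov--Fenchel / second Minkowski inequality $\mixed{1}^2 \ge \mixed{0}\mixed{2}$ applied at points where $\xi = 0$) pins down that $\Omega \sim \lstar K$ is not merely a tangential body but an $(n-2)$-tangential body of $K$. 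I would invoke the characterization of equality in the quermassintegral inequalities from \cite[\S 7.6]{Sch2} for this last step.
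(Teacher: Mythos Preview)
The paper's own proof is a single sentence: it observes that $\xi(-\lambda)/n^2$ coincides with the function $\Delta(\lambda)$ appearing in the proof of \cite[Theorem~7.6.19]{Sch2}, and cites that proof for all three assertions. Your proposal instead tries to derive everything from the concavity of $f_0$ and $f_1$ alone. That is a genuinely different route, but as written it has real gaps.

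First, the claim $f_1(r)=0$ is false in general: the kernel $\Omega\sim rK$ can have dimension $n-1$ (e.g.\ $\Omega$ a slab-like box, $K$ a ball), and then $v_1(r)=\mixed{1}(\Omega\sim rK,K)>0$. This already undermines your argument for $g'_+\le 0$. More seriously, even granting $f_0(r)=f_1(r)=0$, concavity of $f_0$ and $f_1$ by itself does \emph{not} force $g=f_0/f_1$ to be monotone; you must use the differential relation $f_0' = -(f_1/f_0)^{n-1}$, and once you unwind what $g'_+\le 0$ means through that relation you are back to needing information about $(f_1)'_+$ that concavity alone does not supply. For the monotonicity of $\xi$ you yourself flag the obstacle and give no argument. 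Finally, in the rigidity step, ``$g$ constant and both $f_i$ concave vanishing at $r$ forces each $f_i$ affine'' is simply wrong (take $f_0=cf_1$ for any concave $f_1$); the correct way to get affinity is via the ODE $v'=-p$ together with $\xi\equiv 0$, which gives $(v^{1/n})'=\text{const}$.

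What is missing throughout is the second mixed volume $v_2(\lambda)=\mixed{2}(\Omega\sim\lambda K,K)$. In Schneider's argument one has the one-sided differentiation inequality $(v_1)'_+\le -(n-1)v_2$ for inner parallel bodies, and combining this with the Aleksandrov--Fenchel inequality $v_1^2\ge v_0 v_2$ gives $\xi\ge 0$; differentiating once more and using the same ingredients gives $\xi$ non-increasing; and the equality case of Aleksandrov--Fenchel ($v_1^2=v_0v_2$) is exactly what characterizes $(n-2)$-tangential bodies, which is why the rigidity statement upgrades from ``tangential body'' to ``$(n-2)$-tangential body''. Your outline never introduces $v_2$, and without it none of the three assertions can be closed. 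If you want a self-contained proof rather than the paper's citation, you should follow this mixed-volume route; the purely $f_0,f_1$ approach does not carry enough information.
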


\begin{proof}
The function $\xi(-\lambda)/n^2$ coincides with the
function $\Delta(\lambda)$ defined in the proof of Theorem~7.6.19 in \cite{Sch2},
where all the stated properties are proved.
\end{proof}

\begin{remark}
In the planar case $n=2$,
Theorem~\ref{thm:ratio} gives the stronger conclusion
that the isoperimetric quotient is strictly decreasing in $[0,\inrad)$ unless $\Omega$ is
homothetic to $K$, in which case it is constant.
Specifically, assume that $\xi(\lstar) = 0$ for some $\lstar \in [0,\inrad)$;
the stated property will follow if we can prove that $\Omega = r K$.
Since the only $0$-tangential body to $K$ is $K$ itself,
from Lemma~\ref{lem:tan} we deduce that,
for every $\lambda\in[\lstar, r)$, 
$\Omega\sim \lambda K$
is homothetic to $K$.
After a translation we can assume that
$\Omega\sim\lstar K = (r - \lstar) K$.
The concavity property of the family of parallel sets
(see \cite[Lemma~3.1.13]{Sch2}),
together with the fact that
$\Omega\sim\lambda K ={(r - \lambda)} K$ for every 
$\lambda\in [\lstar, r]$, 
imply that
\[
(1-t)\Omega \subseteq (1 - t) r K
\qquad
\forall t\in [\lstar/r,1].
\]
For $t=\lstar/r$ we get the inclusion $\Omega\subseteq r K$; on the other hand,
the opposite inclusion $\Omega \supseteq r K$ follows from the definition of inradius.
\end{remark}

\def\cprime{$'$}
\begin{bibdiv}
	\begin{biblist}
		
		\bib{C7}{article}{
			author={Crasta, {G.}},
			title={Estimates for the energy of the solutions to elliptic {D}irichlet
				problems on convex domains},
			date={2004},
			ISSN={0308-2105},
			journal={Proc. Roy. Soc. Edinburgh Sect. A},
			volume={134},
			number={1},
			pages={89\ndash 107},
			url={https://doi.org/10.1017/S0308210500003097},
			review={\MR{2039904}},
		}
		
		\bib{CM6}{article}{
			author={Crasta, {G.}},
			author={Malusa, {A.}},
			title={The distance function from the boundary in a {M}inkowski space},
			date={2007},
			ISSN={0002-9947},
			journal={Trans. Amer. Math. Soc.},
			volume={359},
			number={12},
			pages={5725\ndash 5759},
			url={https://doi.org/10.1090/S0002-9947-07-04260-2},
			review={\MR{2336304}},
		}
		
		\bib{Domokos}{article}{
			author={Domokos, {G.}},
			author={L\'{a}ngi, {Z.}},
			title={The isoperimetric quotient of a convex body decreases
				monotonically under the eikonal abrasion model},
			date={2019},
			ISSN={0025-5793},
			journal={Mathematika},
			volume={65},
			number={1},
			pages={119\ndash 129},
			url={https://doi.org/10.1112/s0025579318000347},
			review={\MR{3867329}},
		}
		
		\bib{Larson2016}{article}{
			author={Larson, S.},
			title={A bound for the perimeter of inner parallel bodies},
			date={2016},
			ISSN={0022-1236},
			journal={J. Funct. Anal.},
			volume={271},
			number={3},
			pages={610\ndash 619},
			url={https://doi.org/10.1016/j.jfa.2016.02.022},
			review={\MR{3506959}},
		}
		
		\bib{Sch2}{book}{
			author={Schneider, R.},
			title={Convex bodies: the {B}runn-{M}inkowski theory},
			edition={expanded},
			series={Encyclopedia of Mathematics and its Applications},
			publisher={Cambridge University Press, Cambridge},
			date={2014},
			volume={151},
			ISBN={978-1-107-60101-7},
			review={\MR{3155183}},
		}
		
	\end{biblist}
\end{bibdiv}

\end{document}